\g@addto@macro\th@plain{\thm@headpunct{}}
\newtheorem{theorem}{Theorem}[section]
\newtheorem{proposition}[theorem]{Proposition}
\newtheorem{lemma}[theorem]{Lemma}
\newtheorem{remark}[theorem]{Remark}
\newcommand{\xx}{ {\textbf x} }
\newcommand{\ab}{ {\textbf a} }
\newcommand{\bb}{ {\textbf b} }
\newcommand{\cb}{ {\textbf c} }
\newcommand{\yy}{ {\textbf y} }
\renewcommand{\ss}{ {\textbf s} }
\newcommand{\ttt}{ {\textbf t} }
\newcommand{\ub}{ {\textbf u} }
\newcommand{\vb}{ {\textbf v} }
\newcommand{\VV}{ \mathcal{S}_+}
\newcommand{\DD}{\mathcal{D}}
\newcommand{\II}{1{\hskip -2.45 pt}\hbox{l}}
\newcommand{\I}{\mathrm{I}}
\newcommand{\R}{\mathbb{R}}
\newcommand{\tr}{\mathrm{tr}\,}
\providecommand{\scalar}[1]{\left\langle#1\right\rangle}
\title{A Matsumoto-–Yor characterization for Kummer and Wishart random matrices}
\author[B. Ko\l{}odziejek]{Bartosz Ko\l{}odziejek}
\address{Faculty of Mathematics and Information Science\\Warsaw University of Technology\\Koszykowa 75\\00-662 Warsaw, Poland}
\email{b.kolodziejek@mini.pw.edu.pl}
\subjclass[2010]{Primary 62E10; Secondary 60E05}
\keywords{Matsumoto-–Yor property; Wishart distribution; Matrix Kummer distribution; Matrix beta distribution; functional equations}
\begin{document}

\begin{abstract}
In the paper we resolve positively the conjecture on a characterization of matrix Kummer and Wishart laws through independence property, which was posed in [Koudou, Statist. Probab. Lett. 82 (2012), 1903--1907]. Apart from the probabilistic result, we determine the general solution of the functional equation associated to the characterization problem under weak assumptions.  
\end{abstract}

\maketitle

\section{Introduction}
Bernstein \cite{B41} proved, under some technical assumptions, that if random variables $X$ and $Y$ are independent, then random variables $U=X+Y$ and $V=X - Y$ are independent if and only if $X$ and $Y$ are Gaussian. 
Under additional assumptions that $X$ and $Y$ have densities, this result is equivalent to finding the general solution of the following functional equation
$$f_X(x)f_Y(y)=2f_{U}(x+y)f_V(x-y),\qquad \mbox{a.e. }(x,y)\in\R^2,$$
where $f_X$, $f_Y$, $f_U$ and $f_V$ are unknown densities (thus measurable and a.e. non-negative) of respective random variables.
Under mild regularity conditions, this functional equation has a solution
\begin{align*}
f_X(x)&=\exp\{A x^2+B_1 x+C_1\}, 
 &f_U(x)=\exp\{\tfrac12A x^2+\tfrac12(B_1+B_2) x+C_3\}, \\
f_Y(x)&=\exp\{A x^2+B_2 x+C_2\},
 &f_V(x)=\exp\{\tfrac12A x^2+\tfrac12(B_1-B_2) x+C_4\},
\end{align*}
for a.e. $x\in\R$, for some $A, B_i, C_i\in\R$ with $C_1+C_2=C_3+C_4+\log(2)$. Since $f_X$, $f_Y$, $f_U$ and $f_V$ have to be integrable on $\R$, $A$ has to be strictly negative so that $X$ and $Y$ are Gaussian with the same variance.

Many similar examples of the so-called independence characterizations have been identified throughout the years. They follow the following general scheme: determine the distributions of $X$ and $Y$ if they are independent and the components of $(U,V)=\psi(X,Y)$ are independent, where $\psi$ is some given {function} defined on the support of $(X,Y)$. If $\psi$ is a diffeomorphism, under the assumption of existence of densities of $X$ and $Y$, such problem is equivalent to solving the following associated functional equation
\begin{align}\label{eq007}
f_X(x)f_Y(y)=J(x,y) f_U(\psi_1(x,y))f_V(\psi_2(x,y)),\qquad \mbox{a.e. }(x,y)\in\R^2,
\end{align}
where $J=\left|\frac{\mathrm{d}\psi_1}{\mathrm{d}x}\frac{\mathrm{d}\psi_2}{\mathrm{d}y}-\frac{\mathrm{d}\psi_1}{\mathrm{d}y}\frac{\mathrm{d}\psi_2}{\mathrm{d}x}\right|$ is the Jacobian of $\psi$. Obviously, not every diffeomorphism yields a probabilistic solution (in which we are here particularly interested) and it is still not clear how to choose appropriate $\psi$.

There is a very interesting family of $\psi$'s which {was} introduced in \cite{KV12}. From the probabilistic point of view, this family is somehow related to the so-called Matsumoto-Yor property. Koudou and Vallois in \cite{KV12} (see also \cite{KV11}) considered $\psi^{(f)}(x,y)=\left(f(x+y),f(x)-f(x+y)\right)$ where $f$ is some regular function and asked the following question: for which $f$ there exist {independent} $X$ and $Y$ such that $U$ and $V$ are independent. The classical Matsumoto-Yor property (see \cite{JW2002MY,Wes02}) is obtained for $f^{(1)}(x)=x^{-1}$. If $f^{(2)}(x)=\log(x)$ we obtain the so-called Lukacs property (see \cite{Lukacs1955, Baker1976, Lajko1979, Mesz2010, WES4}). 
Another important case identified in \cite{KV12} was $f^{(3)}(x) = \log(1 + x^{-1})$, which is the subject of present study.

All above-mentioned cases have their matrix-variate analogues, where $X$ and $Y$ are considered to be random matrices (or equivalently, $X$ and $Y$ to be random variables valued in the set of matrices). Such properties are usually much harder to prove due to additional noncommutativity of matrix multiplication, which is not witnessed if $X$ and $Y$ are $1$-dimensional. 
A generalization of the classical Matsumoto-Yor property was considered in {\cite{Wes02, WesLe00, BK17}} along with a characterization of probability laws having this property. This characterization was proven via solving the associated functional equation
\begin{align}\label{eqluk}
A(\xx)+B(\yy)=C(\xx+\yy)+D\left(\xx^{-1}-(\xx+\yy)^{-1}\right),\qquad (\xx,\yy)\in\VV^2,
\end{align}
where $\VV$ is the cone of positive definite real matrices of {full} rank and $A, B, C, D\colon\VV\to\R$ are unknown functions. This functional equation is a matrix-variate version of \eqref{eq007} after taking the logarithm of both sides, which is allowed if one assumes that $f_X$ and $f_Y$ are positive on $\VV$. Eq. \eqref{eqluk} was solved in \cite{BK17} under the assumption that $A$ and $B$ are continuous, which is the best result available at the moment. Actually, \eqref{eqluk} was considered there in a more general setting, that is, on symmetric cones of which $\VV$ is the prime example. 

A generalization of other properties identified in \cite{KV12} to matrices is not automatic and this is due to the fact there is no natural notion of division of matrices. For example, Lukacs property on $\VV$ is stated with $\psi^{(2)}(\xx,\yy)=\left(\xx+\yy,g(\xx+\yy)\cdot\xx\cdot g(\xx+\yy)^\top\right)$, where $g\colon \VV\mapsto M_r$ is such that $g(\xx)\cdot\xx\cdot g(\xx)^\top=\I$ for any $\xx\in\VV$. Here ``$\cdot$'' denotes the ordinary matrix multiplication, $\I$ is the identity matrix in $\VV$ and $\xx^\top$ denotes the transpose of $\xx$. One can take for example $g(\xx)=(\xx^{1/2})^{-1}$, where $\xx^{1/2}$ is the unique positive definite square root of $\xx=\xx^{1/2}\cdot\xx^{1/2}$. A characterization of laws having Lukacs property was considered in \cite{BW2002, BK13,BK16a}, and in the latter paper the general solution of the following associated functional equation was found
\begin{align*}
A(\xx)+B(\yy)=C(\xx+\yy)+D\left(g(\xx+\yy)\cdot\xx\cdot g(\xx+\yy)^\top\right),\qquad (\xx,\yy)\in\VV^2
\end{align*}
under the assumption that $A$ and $B$ are continuous.

In the present paper we will consider a generalization of $f^{(3)}$ to $\VV$, which was introduced in \cite{Kou12}:
$$\psi^{(3)}(\xx,\yy)=\left(\xx+\yy, (\I+(\xx+\yy)^{-1})^{1/2}\cdot(\I+\xx^{-1})^{-1}\cdot (\I+(\xx+\yy)^{-1})^{1/2}\right).$$
In \cite{Kou12} it was shown that if $X$ and $Y$ are independent, $X$ has matrix Kummer law and $Y$ has Wishart law with suitable parameters, then the components of $(U,V)=\psi^{(3)}(U,V)$ are also independent. In the very same paper, a conjecture is posed that this independence property characterizes matrix-Kummer and Wishart laws. 

There are two main result of the present paper. One is to find the general solution of the functional equation associated with the characterization conjectured in \cite{Kou12} and the second is to prove this conjecture. 
In order to solve the functional equation, we will use techniques developed in \cite{BK16a} and further used in \cite{BK16b, BK16c, BK17, AP17}. There are other independence characterizations involving Kummer distribution, which use different methods \cite{JW15, PW17}; see also a characterization of vector-variate Kummer law in \cite{PW16}. Finally, it is important to note that in some sense one can pass with the rank $r$ of $\VV$ to infinity and obtain properties and characterizations of laws of free random variables; see \cite{KSZ15} for Lukacs and \cite{KSZ17} for Matsumoto-Yor property in free probability.

The paper is organized as follows. In the next Section we set the notation and recall some basic properties of the determinant. Section 3 is devoted to study the most important technical step, that is, solving the functional equation (Theorem \ref{mainfun}) related to the characterization of probability laws. In Section 4 we introduce the probability laws on $\VV$ and prove the second main result (Theorem \ref{mainprob}).

\section{Notation and preliminaries}
Let $\mathrm{Sym}(r,\R)$ denote the set of {real} symmetric matrices of size $r\times r$. We endow the space $\mathrm{Sym}(r,\R)$ with the scalar product $\scalar{\xx,\yy}=\tr(\xx\cdot\yy)$. In $\mathrm{Sym}(r,\R)$ we consider the cone $\VV$ of positive definite symmetric matrices of rank $r$. Elements of $\mathrm{Sym}(r,\R)$, if non-random, will be denoted by bold letters. It should be noted that results of the present paper remain true for all symmetric cones, but we stick to $\VV$ so that our arguments are easier accessible to a wider audience.

For $\xx,\yy\in \VV$ we define a multiplication 
$$\xx\circ\yy=\xx^{1/2}\cdot \yy\cdot \xx^{1/2},$$
where $\xx^{1/2}$ is the unique positive definite square root of $\xx=\xx^{1/2}\cdot \xx^{1/2}$. The product $\circ$ is inner {(that is, $\xx\circ\yy\in\VV$ if $\xx,\yy\in\VV$)}, but {neither} commutative nor associative. The identity matrix $\I$ is the neutral element for $\circ$. If $\xx$ and $\yy$ commute, then $\xx\circ\yy=\xx\cdot\yy$.

Let $\det$ denote the usual determinant in $\VV$. We have
\begin{align}\label{det0}
\det(\xx\circ\yy)=\det(\xx) \det(\yy),\qquad (\xx,\yy)\in\VV^2
\end{align}
and it will be crucial for us that this property actually characterizes determinant (see \cite[Theorem 3.4]{BK15}). We will use this fact {several times} throughout the paper.


\begin{proposition}\label{ProK1}
	For $\xx,\yy\in\VV$ set $\ub=\left(\I+(\xx+\yy)^{-1}\right)\circ\left(\I+\xx^{-1}\right)^{-1}$.
	Then $\ub\in\VV$, $\I-\ub\in\VV$ and
	\begin{align*}
		\det(\ub)&=\frac{\det(\I+\xx+\yy)}{\det(\xx+\yy)}\frac{\det(\xx)}{\det(\I+\xx)},\\
		\det(\I-\ub)&=\frac{\det(\yy)}{\det(\I+\xx)\det(\xx+\yy)}
	\end{align*}
\end{proposition}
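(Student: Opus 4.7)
The plan is to introduce the shorthand $\ab = \I + (\xx+\yy)^{-1}$ and $\bb = (\I+\xx^{-1})^{-1}$, so that by definition $\ub = \ab\circ\bb$. Both $\ab$ and $\bb$ lie in $\VV$, since $\xx, \xx+\yy \in\VV$ and $\VV$ is closed under inversion and addition of positive definite matrices; because $\circ$ is inner on $\VV$, this immediately gives $\ub\in\VV$.

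For the determinant of $\ub$, I would apply the multiplicativity \eqref{det0} of $\det$ under $\circ$, obtaining $\det(\ub) = \det(\ab)/\det(\I+\xx^{-1})$, and then use the elementary identity $\det(\I+\cb^{-1}) = \det(\I+\cb)/\det(\cb)$ with $\cb\in\{\xx,\xx+\yy\}$; this yields the claimed formula for $\det(\ub)$ in a few lines.

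The less immediate part is the analysis of $\I - \ub$. The key observation is the rearrangement
\begin{equation*}
\I - \ub \;=\; \ab^{1/2}\bigl(\ab^{-1} - \bb\bigr)\ab^{1/2} \;=\; \ab \circ (\ab^{-1}-\bb),
\end{equation*}
obtained by writing $\I = \ab^{1/2}\ab^{-1}\ab^{1/2}$. I would then exploit the identity $(\I+\cb^{-1})^{-1} = \I - (\I+\cb)^{-1}$ to rewrite $\ab^{-1} = \I - (\I+\xx+\yy)^{-1}$ and $\bb = \I - (\I+\xx)^{-1}$, so that
\begin{equation*}
\ab^{-1} - \bb \;=\; (\I+\xx)^{-1} - (\I+\xx+\yy)^{-1}.
\end{equation*}
Since $\yy\in\VV$ gives $\I+\xx+\yy \succ \I+\xx$ and the inverse is operator-antimonotone on $\VV$, this difference lies in $\VV$. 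Combined with the rearrangement above, this simultaneously settles $\I-\ub\in\VV$ and reduces $\det(\I-\ub)$ to $\det(\ab)\det(\ab^{-1}-\bb)$ via \eqref{det0}.

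Finally, to compute $\det(\ab^{-1}-\bb)$ I would use the algebraic identity $M^{-1} - N^{-1} = M^{-1}(N-M)N^{-1}$ with $M = \I+\xx$, $N = \I+\xx+\yy$, giving
\begin{equation*}
(\I+\xx)^{-1} - (\I+\xx+\yy)^{-1} \;=\; (\I+\xx)^{-1}\cdot \yy \cdot (\I+\xx+\yy)^{-1}.
\end{equation*}
Ordinary multiplicativity of $\det$ on products (not to be confused with \eqref{det0}) then immediately yields the desired expression. I do not foresee a genuinely hard step; the only point to watch is that $\circ$ is non-associative, but since the rearrangement $\I-\ub = \ab\circ(\ab^{-1}-\bb)$ is a single circle product, no associativity issue arises.
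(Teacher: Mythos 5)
Your proposal is correct and follows essentially the same route as the paper's (very terse) proof: both rest on the multiplicativity of $\det$ under $\circ$ from \eqref{det0} together with the difference-of-inverses identity $M^{-1}-N^{-1}=M^{-1}\cdot(N-M)\cdot N^{-1}$, your rearrangement $\I-\ub=\ab\circ(\ab^{-1}-\bb)$ being exactly the step the paper leaves implicit. Your additional remarks on positivity (innerness of $\circ$ and operator antimonotonicity of inversion) correctly fill in the membership claims $\ub\in\VV$, $\I-\ub\in\VV$.
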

\begin{proof}
	Use \eqref{det0} and
	$$\det(\ab^{-1}-\bb^{-1})=\det\left(\ab^{-1}\cdot(\bb-\ab)\cdot\bb^{-1}\right)=\frac{\det(\bb-\ab)}{\det(\ab) \det(\bb)}$$
	for nonsingular $\ab$ and $\bb$.
\end{proof}

\section{Functional equations}
By $(\I+\VV)$ we denote the set $\{\I+\xx\colon \xx\in\VV\}$. We will need the following {result} later on. 
\begin{lemma}\label{PEX}
	Let $A,B,C\colon\I+\VV\to\R$ be continuous. Assume that
	$$A(\xx)+B(\yy)=C(\xx\circ\yy),\qquad (\xx,\yy)\in(\I+\VV)^2.$$
	Then there exist real constants $p, \alpha, \beta$ such that for $\xx\in\I+\VV$,
	\begin{align}\begin{split}
	A(\xx) & =p\log\det(\xx)+\alpha, \\
	B(\xx) & =p\log\det(\xx)+\beta, \\
	C(\xx) & =p\log\det(\xx)+\alpha+\beta. \\
	\end{split}\end{align}
\end{lemma}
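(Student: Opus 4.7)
The plan is to reduce the three unknowns to a single Cauchy-type function $F$ on $\I+\VV$, extend $F$ to all of $\VV$ via a scaling argument, and then invoke the characterization of $\det$ recalled in Section 2. Setting $\yy=\I$ and then $\xx=\I$ in the hypothesis gives $C(\xx)=A(\xx)+B(\I)=B(\xx)+A(\I)$, so the function
\[
F(\xx) := A(\xx)-A(\I) = B(\xx)-B(\I) = C(\xx)-A(\I)-B(\I)
\]
is continuous on $\I+\VV$, satisfies $F(\I)=0$, and obeys
\[
F(\xx)+F(\yy)=F(\xx\circ\yy)\qquad \text{on }(\I+\VV)^2.
\]
It therefore suffices to show that $F$ is a constant multiple of $\log\det$, for then $\alpha:=A(\I)$ and $\beta:=B(\I)$ yield the desired forms of $A$, $B$, $C$.

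The main obstacle is that $\I+\VV$ is not stable under multiplication by scalars in $(0,1)$, so \cite[Theorem 3.4]{BK15} cannot be invoked on $F$ directly. To bypass this, I would first extract a scaling relation. Since $(s\I)\circ\yy=s\yy$ whenever $s\geq 1$, the functional equation for $F$ gives $F(s\I)+F(\yy)=F(s\yy)$ for all $\yy\in\I+\VV$. Specialising further to $\yy=t\I$ makes $g(s):=F(s\I)$ satisfy $g(s)+g(t)=g(st)$ on $[1,\infty)$ with $g(1)=0$; by continuity this forces $g(s)=c\log s$ for a unique $c\in\R$, whence
\[
F(s\yy)=F(\yy)+c\log s,\qquad s\geq 1,\ \yy\in\I+\VV.
\]

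This scaling relation allows a consistent extension of $F$ to all of $\VV$: for $\xx\in\VV$ set $\hat F(\xx):=F(s\xx)-c\log s$ for any $s\geq 1$ large enough that $s\xx\in\I+\VV$. The displayed identity above shows this is independent of the choice of $s$, continuous on $\VV$, and equal to $F$ on $\I+\VV$. Using $(s_1\xx)\circ(s_2\yy)=s_1s_2\,(\xx\circ\yy)$ together with the functional equation for $F$, one checks $\hat F(\xx)+\hat F(\yy)=\hat F(\xx\circ\yy)$ on all of $\VV^2$. Consequently $\exp\hat F\colon\VV\to(0,\infty)$ is continuous and multiplicative with respect to $\circ$, so \cite[Theorem 3.4]{BK15} forces $\hat F=p\log\det$ for a unique $p\in\R$. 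Restricting back to $\I+\VV$ gives $F=p\log\det$ and the lemma follows.
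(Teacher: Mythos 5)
Your overall strategy---collapse the three unknowns into one function satisfying the multiplicative Cauchy equation on $\I+\VV$, extend that function to all of $\VV$ by scaling, and then invoke \cite[Theorem 3.4]{BK15}---is exactly the route the paper takes, and your extension step is sound: deriving $F(s\yy)=F(\yy)+c\log s$ from $(s\I)\circ\yy=s\yy$ and using it to show that $\hat F(\xx)=F(s\xx)-c\log s$ is independent of the admissible $s$ is, if anything, a tidier verification that the extension still satisfies the Cauchy equation than the paper's case-by-case check of the analogous extension.

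There is, however, a domain error at the very first step: $\I\notin\I+\VV$. The set $\I+\VV=\{\I+\xx\colon\xx\in\VV\}$ consists of symmetric matrices all of whose eigenvalues are strictly greater than $1$, so the substitutions $\yy=\I$ and $\xx=\I$, the values $A(\I)$ and $B(\I)$, the normalization $F(\I)=0$, and the claim that $g$ satisfies the logarithmic Cauchy equation ``on $[1,\infty)$ with $g(1)=0$'' all refer to points outside the domains of $A$, $B$, $C$, $F$. Consequently the function $F$ on which your whole reduction rests is not defined as written. The repair is immediate and is precisely what the paper does: substitute $\yy=2\I$ and $\xx=2\I$ instead, giving $A(\xx)=C(2\xx)-B(2\I)$ and $B(\yy)=C(2\yy)-A(2\I)$, and after one further rescaling one obtains a single continuous $f$ on $\I+\VV$ with $f(\xx)+f(\yy)=f(\xx\circ\yy)$; your scalar relation (now only for $s>1$, which is enough to force $g(s)=c\log s$ on $(1,\infty)$ by continuity) and the remainder of your argument then go through unchanged. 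So the flaw is a fixable slip rather than a structural one, but as submitted the proof's key normalization is undefined.
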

\begin{proof}
We {will first simplify the problem to solving a functional equation with one unknown function}: setting ${\yy}=2\I$ and ${\xx}=2\I$ successively, we get
	$$A(\xx)=C(2\xx)-B(2\I),\qquad B(\xx)=C(2\xx)-A(2\I)$$
	{for $\xx\in(\I+\VV)$.}
	Thus, using $A(2\I)+B(2\I)=C(4\I)$, we get
	$$C(2\xx)+C(2\yy)-C(4\I)=C(\xx\circ\yy).$$
	With $(\ss,\ttt)\in(\I+\VV)\times(\I+\VV)$ setting $\xx=2 \ss$ and $\yy=2\ttt$ above,
	we get
	$$C(4\ss) + C(4\ttt)-C(4\I)=C(4\, \ss\circ \ttt).$$
	Hence, function $f\colon (\I+\VV)\to\R$ defined by $f(\xx):=C(4\xx)-C(4\I)$ satisfies 
	\begin{align}\label{falone}
	f(\xx)+f(\yy)=f(\xx\circ\yy),\qquad (\xx,\yy)\in(\I+\VV)^2.
	\end{align}
Define an extension $\bar{f}\colon\VV\to\R$ of $f$ by
$$\widetilde{f}(\xx)=\begin{cases}
f(\xx), &\mbox{ if }\xx\in(\I+\VV), \\
f(\alpha_\xx \xx)-f(\alpha_\xx\I), &\mbox{ if }\xx\in\VV\setminus(\I+\VV),
\end{cases}
$$ 
where 
$$\alpha_{\xx}=\frac{2}{\min_{1\leq k\leq r} \{\lambda_k(\xx)\}}$$ and $\lambda_k(\xx)$ is $k$th eigenvalue of $\xx$. If $\xx\in\VV\setminus(\I+\VV)$, then there exists $k$ such that $\lambda_k(\xx)\in(0,1]$. Thus, in such case, $\alpha_{\xx}\geq 2$ and $\alpha_\xx \xx\in(\I+\VV)$.
It is easy to see that such extension satisfies 
\begin{align}\label{wf}
\widetilde{f}(\xx)+\widetilde{f}(\yy)=\widetilde{f}(\xx\circ\yy),\qquad (\xx,\yy)\in\VV^2.
\end{align}
{For example, assume that $\xx\in(\I+\VV)$, while $\yy$ and $\xx\circ\yy\in\VV\setminus(\I+\VV)$. Then, \eqref{wf} is equivalent to (after rearrangements)
\begin{align*}
f(\xx)+f(\alpha_\yy \yy)+f(\alpha_{\xx\circ\yy}\I) =f(\alpha_{\xx\circ\yy}\xx\circ\yy)+f(\alpha_\yy \I)
\end{align*}
and both sides equal $f(\alpha_\yy\alpha_{\xx\circ\yy}\xx\circ\yy)$ by \eqref{falone}. The other cases are treated accordingly (see \cite[Lemma 3.2]{BK13} for similar argument applied to additive equation).
}
By \cite[Theorem 3.4]{BK15}, we obtain $\widetilde{f}(\xx)=p\log\det(\xx)$ for some $p\in\R$, which ends the proof.
\end{proof}

\begin{lemma}\label{l1dim}
	Let $H\colon (0,1)\to\R$ and $G\colon(0,\infty)\to\R$ be continuous functions satisfying
	\begin{align}\label{1dim}
	H\left(\frac{x}{1+x}\right)+G(y)=H\left(\frac{1+x+y}{x+y}\frac{x}{1+x}\right)+G(x+y),\qquad(x,y)\in(0,\infty)^2.
	\end{align}
	Then, there exist $q, C_1, C_2\in\R$ such that 
	\begin{align*}
	H(x)&=q\log(1-x)+C_2, \quad x\in(0,1),\\
	G(x)&=q\log(x)+C_1, \,\,\,\,\quad\quad x\in(0,\infty).
	\end{align*}
\end{lemma}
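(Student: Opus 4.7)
The plan is to reduce \eqref{1dim} to a form amenable to differentiation, derive an ODE, solve it, and separately handle the regularity upgrade.

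\textbf{Reformulation.} Setting $\tilde H(x) := H(x/(1+x))$, which is continuous on $(0,\infty)$, the equation \eqref{1dim} becomes
$$\tilde H(x) + G(z) = \tilde H\!\left(\tfrac{x(1+x+z)}{z}\right) + G(x+z), \qquad x, z > 0.$$
Denote the four arguments appearing in \eqref{1dim} by $u := x/(1+x)$, $y$, $v := (1+x+y)x/[(x+y)(1+x)]$, $s := x+y$. A direct computation shows the multiplicative identity
$(1-u)\,y = (1-v)\,s,$
which is the combinatorial shadow of the logarithmic form of the solution.

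\textbf{ODE argument.} Assume temporarily that $H, G \in C^2$. Differentiating \eqref{1dim} in $x$ and in $y$ and subtracting, using $v_x - v_y = (1+s)/[(1+x)^2 s]$ together with $v/u = (1+s)/s$, one obtains
$$F(u) - F(v) = x(1+x)\,G'(y), \qquad F(t) := t\,H'(t).$$
A second differentiation in $y$, using $v_y = -x/[(1+x) s^2]$ and $(1+x)s = y/(1-v)$, yields
$$F'(v)(1-v)^2 = y^2 G''(y).$$
For each fixed $y>0$ the value $v(x,y)$ sweeps all of $(0,1)$ as $x$ runs over $(0,\infty)$, so both sides must equal one common constant $K$. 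Integrating gives $G(y) = -K\log y + c_1 y + c_2$ and $H(u) = (K+c_3)\log u - K\log(1-u) + c_4$. Substituting back into \eqref{1dim}, the identity $(1-u)y=(1-v)s$ cancels the $K$-logarithms and leaves $(K+c_3)\log(s/(1+s)) = c_1\,x$ for all $0 < x < s$; the $x$-independence of the left-hand side forces $c_1=0$, and then $K+c_3=0$. Setting $q := -K$, $C_1 := c_2$, $C_2 := c_4$ gives the stated form.

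\textbf{The main obstacle: regularity.} The hypothesis is only continuity, so the $C^2$-step above requires justification. A direct substitution-bootstrap fails, since each rearrangement of \eqref{1dim} merely expresses $H$ or $G$ as a continuous composition of the other with a smooth diffeomorphism, preserving $C^0$ but not improving it. The resolution, in the spirit of \cite{BK16a}, is to smooth the equation by integrating against an auxiliary parameter to produce mollified counterparts $H^\varepsilon, G^\varepsilon$ satisfying an identity of the same shape, apply the ODE argument above to them, and pass to the limit $\varepsilon \to 0$. This regularity upgrade is where the main technical content of the proof lies.
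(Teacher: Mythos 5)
Your ODE computation under the assumption $H,G\in C^2$ checks out: the identity $(1-u)y=(1-v)s$ is correct, the reductions to $F(u)-F(v)=x(1+x)G'(y)$ and $F'(v)(1-v)^2=y^2G''(y)$ are correct, and the separation-of-variables conclusion together with the back-substitution correctly forces $c_1=0$ and $K+c_3=0$. The problem is that the entire burden of the lemma lies in the step you defer, and the route you sketch for it does not work. Mollification requires that convolving the equation against a kernel in one variable again yields an equation of the same shape for smoothed versions of $H$ and $G$; here the inner function $v(x,y)=\tfrac{(1+x+y)x}{(x+y)(1+x)}$ is a nonlinear function of $(x,y)$ entangled with $x+y$, so $\int H(v(x,y))\rho_\varepsilon(y-y_0)\,dy$ is not $H^\varepsilon$ evaluated at any clean argument, and no identity of the form \eqref{1dim} survives for the mollified functions. (A change of variables $y\mapsto v$ at fixed $x$ turns that integral into a smoothed $H$, but then the $G(y)$ and $G(x+y)$ terms acquire $x$-dependent kernels, and you are back where you started.) Upgrading continuity to $C^2$ for equations of this type is possible via J\'arai-type regularity theory, but that is a substantial piece of machinery you neither cite nor verify the hypotheses of; as written, the proof is incomplete at its critical point.

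For comparison, the paper avoids differentiation entirely. It chooses $x_\alpha,y_\alpha$ depending on parameters $s,t$ so that the argument of $H$ on the right-hand side of \eqref{1dim} becomes $1-\alpha s$ while $x_\alpha\to x_0(t)$, and passes to the limit $\alpha\to 0$; this produces $f(t)-f(st)=\lim_{\alpha\to0}\{H(1-\alpha s)-H(1-\alpha)\}$, a pexiderized logarithmic Cauchy equation whose continuous solutions are $\beta\log t+C$. That pins down $H$ in terms of $G$, and a symmetry argument (swapping $x$ and $y$ and subtracting) then shows the remaining unknown is constant. Everything there uses only continuity, which is exactly the regularity you actually have. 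If you want to salvage your approach you must either carry out a genuine regularity bootstrap or replace the ODE step with a limiting argument of this kind.
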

\begin{remark}
	Above result {can be deduced} from {the proof of the main result in} \cite{KV11}. {However,} we decided to present {another proof which does not rely on arguments from \cite{KV11}.}
\end{remark}
\begin{proof}
	For {arbitrary} $s,t\in(0,\infty)$ and $\alpha>0$, set
	$$x_\alpha=\frac{1}{2}\left(\sqrt{4t+(1-\alpha st)^2}-1-\alpha st\right),\qquad y_\alpha=\alpha s t.$$
	For $\alpha$ sufficiently small, $x_\alpha>0$. Moreover,
	$$\frac{1+x_\alpha+y_\alpha}{x_\alpha+y_\alpha}\frac{x_\alpha}{1+x_\alpha}=1-\alpha s$$
	and 
	$$x_0(t):=\lim_{\alpha\to0} x_\alpha=\frac{1}{2}\left(\sqrt{4t+1}-1\right)\in (0,\infty).$$
Thus, passing to the limit in \eqref{1dim} {for $x=x_\alpha$ and $y=y_\alpha$} one eventually obtains for all $s,t>0$,
	$$
	f(t):=H\left(\frac{x_0(t)}{1+x_0(t)}\right)-G(x_0(t))=\lim_{\alpha\to0}\left\{H\left(1-\alpha s\right)-G(\alpha s t)\right\}.
	$$
	Inserting $(1,st)$ instead of $(s,t)$ above, we obtain
	$$
	f(st)=\lim_{\alpha\to0}\left\{H\left(1-\alpha \right)-G(\alpha s t)\right\}
	$$
	and after subtracting
	$$f(t)-f(st)=\lim_{\alpha\to0}\left\{ H\left(1-\alpha s\right)-H\left(1-\alpha \right)\right\},$$
	which is a pexiderized version of the Cauchy logarithmic functional equation.
	Since $f$ is continuous, this implies that there exist real constants $\beta, C$ such that
	$$f(t)=H\left(\frac{x_0(t)}{1+x_0(t)}\right)-G(x_0(t))=\beta\log t+C,$$
	which is equivalent to
	$$H(u)=G\left(\frac{u}{1-u}\right)+\beta\log\frac{u}{(1-u)^2}+C$$
	{for $u\in(0,1)$}.
	Using above in \eqref{1dim} and substituting $G(x)=-\beta\log x+\widetilde{G}(x)$
	we arrive at
	$$\widetilde{G}(x)+\widetilde{G}(y)=\widetilde{G}\left(\frac{x}{y}(1+x+y)\right)+\widetilde{G}(x+y),\qquad(x,y)\in(0,\infty)^2.$$
	Interchanging the roles of $x$ and $y$ and subtracting such obtained equation we get
	$$\widetilde{G}\left(\frac{y}{x}(1+x+y)\right)=\widetilde{G}\left(\frac{x}{y}(1+x+y)\right).$$
	Setting $x=t(st-1)/(s+t)$ and $y=s(st-1)/(s+t)$ above simplifies to
	$$\widetilde{G}\left(s^2\right)=\widetilde{G}\left(t^2\right)$$
	provided $s,t,st-1>0$. This implies that $\widetilde{G}\equiv C_2$ is a constant function and finally
	$$H(u)=-\beta\log\left(\frac{u}{1-u}\right)+\beta\log\frac{u}{(1-u)^2}+C+C_2.$$
\end{proof}

The following Theorem is the main technical result of the present paper. To prove it, 
we will use techniques developed in \cite{BK16a} and further used in \cite{BK16b, BK16c, BK17, AP17}. {Let
$$\DD=\{ \xx\in \VV\colon\I-\xx\in\VV\}.$$}
\begin{theorem}\label{mainfun}
	Let $f,g,k\colon \VV\to\R$ and $h\colon\mathcal{D}\to\R$ be continuous functions such that
	\begin{align}\label{maineq}
	f(\xx)+g(\yy)=k(\xx+\yy)+h\left((\I+(\xx+\yy)^{-1})\circ(\I+\xx^{-1})^{-1}\right),\qquad (\xx,\yy)\in\VV^2.
	\end{align}
	Then there exist $p,q\in\R$, $\cb\in\mathrm{Sym}(\R,r)$, $C_i\in\R$, $i=1,\ldots,4$, such that for $x\in\VV$ and $\ub\in\mathcal{D}$
	\begin{align}\label{sol}\begin{split}
	f(\xx)&=-\scalar{\cb,\xx}+p\log\det(\xx)-q\log\det(\I+\xx)+C_1,\\
	g(\xx)&=-\scalar{\cb,\xx}+(q-p)\log\det(\xx)+C_2,\\
	k(\xx)&=-\scalar{\cb,\xx}-p\log\det(\I+\xx)+q\log\det(\xx)+C_3,\\
	h(\ub)&=p\log\det(\ub)+(q-p)\log\det(\I-\ub)+C_4,
	\end{split}\end{align}
	with $C_1+C_2=C_3+C_4$.
\end{theorem}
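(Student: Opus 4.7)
The plan is to reduce \eqref{maineq} in stages to a Cauchy--Pexider equation on the multiplicative structure $(\I+\VV,\circ)$, which is resolved by Lemma \ref{PEX}, together with a residual additive Cauchy equation on $(\VV,+)$ that will account for the linear term $\scalar{\cb,\xx}$. I will follow the general strategy of \cite{BK16a}.

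First, I would eliminate two of the four unknowns by evaluating \eqref{maineq} at $\yy=\I$ and at $\xx=\I$. Noting that
$$\left(\I+(\I+\yy)^{-1}\right)\circ\left(\I+\I^{-1}\right)^{-1}=\tfrac12\left(\I+(\I+\yy)^{-1}\right),$$
this expresses $f$ and $g$ entirely in terms of $k$ and $h$; substituting back into \eqref{maineq} leaves a single equation involving only $k$ and $h$.

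Second, I would reparameterize by $\ab=\I+(\xx+\yy)^{-1}\in\I+\VV$ and $\vb=(\I+\xx^{-1})^{-1}\in\DD$, so that $\ub=\ab\circ\vb$. Taking differences of the reduced equation at pairs sharing the same value of $\xx+\yy$ (so that the $k(\xx+\yy)$ term is annihilated), and using Proposition \ref{ProK1} to rewrite the relevant determinants in additive form, should yield an equation of Pexider--Cauchy type
$$\widetilde{h}(\ab\circ\vb)=\Phi(\ab)+\Psi(\vb),$$
for continuous auxiliary functions $\widetilde{h},\Phi,\Psi$ built out of $h$ and $k$. Once the additive constants are absorbed and the domains are matched, this reduces to the hypotheses of Lemma \ref{PEX}, forcing $\widetilde{h}(\xx)=p\log\det(\xx)+\mathrm{const}$. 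Combined again with Proposition \ref{ProK1}, this in turn extracts both the $p\log\det(\ub)$ and the $(q-p)\log\det(\I-\ub)$ components of $h$, along with the corresponding $\log\det(\xx)$, $\log\det(\I+\xx)$, $\log\det(\xx+\yy)$, $\log\det(\I+\xx+\yy)$ pieces of $f,g,k$.

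After those subtractions, what remains is an additive Cauchy equation of the form
$$L_1(\xx)+L_2(\yy)=L_3(\xx+\yy)$$
for continuous $L_i\colon\VV\to\R$, whose general continuous solution on the open cone $\VV\subset\mathrm{Sym}(r,\R)$ is affine: $L_i(\xx)=-\scalar{\cb,\xx}+C_i$ for a common $\cb\in\mathrm{Sym}(r,\R)$ (symmetry of $\cb$ is automatic since $\xx$ ranges in symmetric matrices). Collecting all contributions yields \eqref{sol} together with the constraint $C_1+C_2=C_3+C_4$. The hard part will be the middle step: producing a bona fide Pexider--Cauchy equation of the above form on a subdomain of $(\I+\VV)\times\DD$ that is rich enough to invoke Lemma \ref{PEX}, while negotiating the noncommutativity of $\circ$ and the compatibility constraint $\ab\circ\vb\in\DD$.
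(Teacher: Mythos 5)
Your overall skeleton (reduce the number of unknowns, resolve a multiplicative Pexider equation via Lemma \ref{PEX}, finish with an additive Cauchy equation giving the $-\scalar{\cb,\xx}$ term) is the right general shape, and your last step does match what the paper does. But the middle step --- exactly the technical core of the theorem --- is a genuine gap, not a negotiable detail. First, the proposed mechanism does not produce a Pexider equation: if you fix $\ss=\xx+\yy$ and subtract two instances of the equation to kill $k$, what remains on the left is $f(\xx_1)-f(\xx_2)+g(\ss-\xx_1)-g(\ss-\xx_2)$ (or, after your $\yy=\I$, $\xx=\I$ eliminations, the same expression with $f,g$ replaced by compositions of $k,h$ at shifted arguments), and nothing in the sketch explains why this should separate into $\Phi(\ab)+\Psi(\vb)$ with $\ab=\I+\ss^{-1}$, $\vb=(\I+\xx^{-1})^{-1}$. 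Second, there is a structural obstruction to getting all of $h$ from Lemma \ref{PEX}: that lemma can only deliver a one-parameter answer $p\log\det(\cdot)+\mathrm{const}$, whereas the true $h$ carries two independent parameters, and its second component $\log\det(\I-\ab\circ\vb)$ does not split as a function of $\ab$ plus a function of $\vb$ (only $\det(\ab\circ\vb)=\det(\ab)\det(\vb)$ factorizes; Proposition \ref{ProK1} is an identity and cannot generate the missing parameter). So the $(q-p)\log\det(\I-\ub)$ part of $h$ cannot be ``extracted'' the way you describe; in the paper it comes from a different tool, namely the restricted-domain equation $h_1((\ss+\ttt)^{-1}\circ\ss)=D(\ttt)-D(\ss+\ttt)+C_4$, which is an Olkin--Baker-type equation solved by invoking \cite[Theorem 3.6]{BK16a}, not by Lemma \ref{PEX}.

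Relatedly, the domain issue you defer as ``matching domains'' is not cosmetic: Lemma \ref{PEX} is stated for both arguments in $\I+\VV$, while your $\vb$ ranges over $\DD$, which is disjoint from $\I+\VV$, and the compatibility constraint $\ab\circ\vb\in\DD$ makes the admissible set non-product (``triangular''), precisely the regime where such equations are hard. The paper circumvents this by limiting arguments tailored so that the mixed terms degenerate: in Step 1 it sends $\xx\to\mathbf{0}$ along $\xx=(\I-\alpha\ss)^{-1}\cdot\alpha\ss$ to land in a genuine Pexider equation on $\VV\times(\I+\VV)$ (this is where $p$ comes from), in Step 2 it sends $\yy\to\infty$ to get the additive Cauchy equation (your last step), and in Steps 3--4 it needs a boundary analysis --- existence of $\lim_{\alpha\to0}h_1((\I+(\alpha\ss)^{-1})^{-1})$, proved by reducing to a scalar equation and Lemma \ref{l1dim} --- before it can pass to the limit and reach the Olkin--Baker equation that produces $q$. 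None of these ingredients (the limit constructions, the boundary lemma, the appeal to \cite{BK16a}) appears in your plan, and without them, or a genuine substitute, the argument does not go through.
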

\begin{proof}
	First nontrivial observation to be made is that \eqref{sol} solves \eqref{maineq}. This was proved in \cite{Kou12} and follows directly by Proposition \ref{ProK1}.
	
	The proof is divided into five Steps. In Steps 1 and 2 we simplify the problem so that we have two unknown functions instead of four as in original problem. Steps 3 and 4 are preparatory for the Step 5, in which we find the general form of function $h$. 
	\begin{description}
		\item[Step 1] 
		{For arbitrary $\ss, \vb\in\VV$ and $\alpha>0$ such that $\I-\alpha s\in\VV$, set in \eqref{maineq}}
		$$\xx=({\I}-\alpha \ss)^{-1}\cdot\alpha \ss,\qquad \yy=\vb-\xx.$$ 
		For $\alpha$ sufficiently small {(the upper bound for $\alpha$ depends on $\ss$)}, $\xx$ and $\yy$ belong to $\VV$.
		Moreover, $(\I+\xx^{-1})^{-1}=\alpha \ss$.
		Since $\xx\to\textbf{0}$ as $\alpha\to0$, we have $\yy\to\vb\in\VV$. By assumption, $g$ is continuous on $\VV$, 
		thus, passing in \eqref{maineq} to the limit as $\alpha\to0$ we obtain
		\begin{align}\label{gk}
		g(\vb)-k(\vb)=\lim_{\alpha\to0}\left\{h(\alpha (\I+\vb^{-1})\circ\ss)-f((\I-\alpha \ss)^{-1}\cdot \alpha \ss)\right\}.
		\end{align}
		Putting $\ss=\I$ we see that the limit
		$$C(\xx):=\lim_{\alpha\to0}\left\{h(\alpha \xx)-f\left(\frac{\alpha}{1-\alpha}\I\right)\right\}$$ exists if $\xx\in\I+\VV$.
		For $\ss\in\I+\VV$ and $\vb\in\VV$, we have $(\I+\vb^{-1})\circ\ss\in\I+\VV$ and thus the right hand side of \eqref{gk} equals
		\begin{align}\label{ST}
		C( (\I+\vb^{-1})\circ\ss)-\lim_{\alpha\to 0}\left\{f\left((1-\alpha \ss)^{-1} \cdot\alpha \ss\right)- f\left(\frac{\alpha}{1-\alpha}\I\right)\right\}.
		\end{align}
		Hence, \eqref{gk} with $g(\xx)-k(\xx)=:A(I+\xx^{-1})$ gives us 
		$$A(\I+\vb^{-1})+B({\ss})=C((\I+\vb^{-1})\circ\ss),\qquad (\vb,\ss)\in\VV\times(\I+\VV),$$
		where $B({\ss})$ denotes the second term in \eqref{ST}.
		Lemma \ref{PEX} then implies that there exist $p,\gamma_1\in\R$ such that
		$$k(\vb)=g(\vb)-p\log\det\left(\I+\vb^{-1}\right)+\gamma_1,\qquad \vb\in\VV.$$
		Using above in \eqref{maineq} and substituting 
		\begin{align*}
		f(\xx)&=p\log\det(\xx)+f_1(\xx)+\gamma_1,\\
		g(\xx)&=-p\log\det(\xx)+g_1(\xx),\\
		h(\ub)&=p\log\det(\ub)-p\log\det(\I-\ub)+h_1(\ub),
		\end{align*}
		we arrive at a functional equation with three unknown functions
		\begin{align}\label{maineq1}
		f_1(\xx)+g_1(\yy)=h_1((\I+(\xx+\yy)^{-1})\circ(\I+\xx^{-1})^{-1})+g_1(\xx+\yy),\qquad (\xx,\yy)\in\VV^2.
		\end{align}
		
		\item[Step 2]
		Set $\yy=\alpha\ttt\in\VV$ in \eqref{maineq1} and pass to the limit as $\alpha\to\infty$. Then, by continuity of $h$ (and so of $h_1$) we have
		\begin{align}\label{eqlin}
		f_1(\xx)-h_1((\I+\xx^{-1})^{-1})=\lim_{\alpha\to\infty}\left\{g_1(\xx+\alpha \ttt)-g_1(\alpha\ttt)\right\}
		\end{align}
		for any $\xx,\ttt\in\VV$. 
		Writing
		$$
		g_1(\xx+\yy+\alpha\xx)-g_1(\alpha\xx)=g_1(\yy+(\alpha+1)\xx)-g_1((\alpha+1)\xx)+g_1(\xx+\alpha\xx)-g_1(\alpha \xx)
		$$
		and passing to the limit as $\alpha\to\infty$, by \eqref{eqlin}, we obtain
		$$L(\xx+\yy)=L(\yy)+L(\xx),\qquad (\xx,\yy)\in\VV\times\VV,$$
		where $L(\xx)=f_1(\xx)-h_1((\I+\xx^{-1})^{-1})$. Thus, ({e.g.} by \cite[Lemma 3.2]{BK13}) we get
		$$f_1(\xx)=h_1((\I+\xx^{-1})^{-1})-\scalar{\cb,\xx},\qquad \xx\in\VV$$
		for some $\cb\in\mathrm{Sym}(r,\R)$. Inserting it into \eqref{maineq1} along with substitution
		$$g_1(\xx)=-\scalar{\cb,\xx}+g_2(\xx),\qquad\xx\in\VV,$$ 
		we arrive at
		\begin{align}\label{maineq2}
		h_1((\I+\xx^{-1})^{-1})+g_2(\yy)=h_1((\I+(\xx+\yy)^{-1})\circ(\I+\xx^{-1})^{-1})+g_2(\xx+\yy),\qquad (\xx,\yy)\in\VV^2.
		\end{align}
		
		\item[Step 3]
		Setting $\xx=x\I$ and $\yy=y\I$, \eqref{maineq2} boils down to a functional equation with scalar arguments
		$$
		h_1\left(\frac{x}{1+x}\I\right)+g_2(y\I)=h_1\left(\frac{1+x+y}{x+y}\frac{x}{1+x}\I\right)+g_2((x+y)\I),\qquad (x,y)\in(0,\infty)^2.
		$$
		Since functions $x\mapsto h_1(x\I)$ and $x\mapsto g_2(x\I)$ are continuous, by Lemma \ref{l1dim} we conclude that there exists a real constant $C_4$ such that $h_1(x \I)\to C_4$ as $x\to0$. 
		Now, observe that setting $\xx=\alpha \ss$ and $\yy=(\I+\alpha \ss)\cdot\ss\cdot(\beta\I-\ss)^{-1}$ we have ($\xx$ and $\yy$ commute)
		$$(\I+(\xx+\yy)^{-1})\circ(\I+\xx^{-1})^{-1}=\frac{\alpha\beta}{\alpha\beta+1}\I.$$
		It is clear that for any $\ss\in\VV$, $\yy\in\VV$ for $\beta$ sufficiently large {(the lower bound for $\beta$ depends on $\ss$)}.
		Since $\xx\to 0$ and $\yy\to\ss\cdot(\beta\I-\ss)^{-1}$ if $\alpha\to 0$, by \eqref{maineq2} we obtain for any $\ss\in\VV$,
		\begin{align*}\lim_{\alpha\to0}h_1\left(\left(\I+(\alpha \ss)^{-1}\right)^{-1}\right)+g_2\left(\ss\cdot(\beta\I-\ss)^{-1}\right)
		=C_4+g_2\left(\ss\cdot(\beta\I-\ss)^{-1}\right),
		\end{align*}
		that is, 
		$$\lim_{\alpha\to0}h_1\left(\left(\I+(\alpha \ss)^{-1}\right)^{-1}\right)=C_4.$$
		
		\item[Step 4]
		Set $\xx=\alpha \ss$ and $\yy=\alpha \ttt$ in \eqref{maineq2}.
		Then, as $\alpha\to 0$,
		$$(\I+(\xx+\yy)^{-1})\circ(\I+\xx^{-1})^{-1}\to (\ss+\ttt)^{-1}\circ\ss.$$
		By Step 3, passing to the limit as $\alpha\to0$ in \eqref{maineq2}, we obtain
		\begin{align}\label{last}
		h_1((\ss+\ttt)^{-1}\circ\ss)=C_4+\lim_{\alpha\to0}\left\{g_2(\alpha \ttt)-g_2(\alpha(\ss+\ttt))\right\}.
		\end{align}
		Setting $\ttt=\I$, we see that the limit 
		\begin{align}D(\xx):=\label{lim}\lim_{\alpha\to 0}\left\{g_2(\alpha \xx)-g_2(\alpha\I)  \right\}
		\end{align}
		exists for $\xx\in\I+\VV$ and equals $h_1({\I-}\xx^{-1})-C_4$. This, in turn, implies that the limit \eqref{lim} exists for any $\xx\in\VV$. Indeed, take $\ss\in\I+\VV$ in \eqref{last} to conclude that
		$$
		\lim_{\alpha\to0}\left\{g_2(\alpha \ttt)-g_2(\alpha(\ss+\ttt))\right\} + \lim_{\alpha\to0}\left\{g_2(\alpha(\ss+\ttt))-g_2(\alpha\I)\right\} = 
		\lim_{\alpha\to0}\left\{g_2(\alpha\ttt)-g_2(\alpha\I)\right\} 
		$$
		exists for any $\ttt\in\VV$, since the limits on the left hand side exist.
		Thus, we finally obtain for $\ss,\ttt\in\VV$,
		\begin{align*}
		h_1((\ss+\ttt)^{-1}\circ\ss)-C_4& =\lim_{\alpha\to0}\left\{g_2(\alpha \ttt)-g_2(\alpha\I)\right\}-\lim_{\alpha\to0}\left\{g_2(\alpha(\ss+\ttt))-g_2(\alpha\I)\right\} \\
		& = D(\ttt)-D(\ss+\ttt),
		\end{align*}
		which on the one hand is the multiplicative functional equation on restricted domain and, on the other, this is simplified Olkin-Baker equation on $\VV$ considered in \cite{BK16a}. Thus, by \cite[Theorem 3.6]{BK16a} with $a\equiv0$, $b=D$, $c=D$, $d=h_1-C_4$ and ${\mathfrak{g}}(\xx)\yy=\xx^{-1}\circ\yy$, we obtain in particular that for some $q\in\R$,
		\begin{align}\label{h1}
		h_1(\ub)=q\log\det(\I-\ub)+C_4,\qquad\ub\in\mathcal{D}.
		\end{align}
		\item[Step 5] 
		Use \eqref{h1} in \eqref{maineq2} and substitute
		\begin{align*}
		g_2(\xx)&=q\log\det(\xx)+g_3(\xx), \qquad \xx\in\VV.
		\end{align*}
		Then, \eqref{maineq2} simplifies to
		$$g_3(\yy)=g_3(\xx+\yy),\qquad (\xx,\yy)\in\VV^2,$$
		which means that $g_3\equiv const=:C_2$.
		
		It is easy to see that \eqref{sol} holds with $\gamma_1=C_1-C_4=C_3-C_2$, which ends the proof.
\end{description}
\end{proof}

\section{Probability laws on $\VV$}
We will consider absolutely continuous laws on $\VV$, which will be characterized in the next Section. Let $d\xx$ denote the Lebesgue measure on {$\left(\mathrm{Sym}(r,\R),\scalar{\cdot,\cdot}\right)$}. 
For $p>(r-1)/2$ and $\sigma\in\VV$, we define the Wishart distribution $\gamma(p,\sigma)$ by its density
\begin{align*}
\gamma(p,\sigma)(\mathrm{d}\xx)=c_{p,\sigma} \det(\xx)^{p-(r+1)/2}\exp(-{\scalar{\sigma,\xx}})\II_{\VV}(\xx)\mathrm{d}\xx,
\end{align*}
where $c_{p,\sigma}$ is a norming constant. 
Beta distribution $Beta(p,q)$ is defined for $p>(r-1)/2$ and $q>(r-1)/2$ by density
\begin{align*}
Beta(p,q)(\mathrm{d}\xx)=c_{p,q} \det(\xx)^{p-(r+1)/2}\det(\I-\xx)^{q-(r+1)/2}\II_{\DD}(\xx)\mathrm{d}\xx,
\end{align*}
where $\DD=\{\xx\in\VV\colon \I-\xx\in\VV\}$ {and $c_{p,q}$ is a norming constant}.

The matrix Kummer distribution was first introduced in \cite{GCN01}; for $a>(r-1)/2$, $b\in\R$ and $\sigma\in\VV$, 
\begin{align*}
K(a,b,\sigma)(\mathrm{d}\xx)=c_{a,b,\sigma} \det(\xx)^{a-(r+1)/2}\det(\I+\xx)^{-b}\exp(-{\scalar{\sigma,\xx}})\II_{\VV}(\xx)\mathrm{d}\xx.
\end{align*}
(Note the misprint in the power of $\det(\I+\xx)$ in \cite[(2.3)]{Kou12})

By $\mu\otimes\nu$ we will denote the product measure of $\mu$ and $\nu$.

\section{The Matsumoto--Yor property of Kummer and Wishart random matrices}
The following Proposition was proved in \cite{Kou12}. It is used in the proof of the next Theorem, which is the main result of \cite{Kou12}:
\begin{proposition}\label{ProK}
A mapping $\psi\colon \VV\times\VV\to\DD\times\VV$ 
defined by 
\begin{align}\label{psi}
\psi(\xx,\yy)=\left(\left(\I+(\xx+\yy)^{-1}\right)\circ\left(\I+\xx^{-1}\right)^{-1}, \xx+\yy\right)
\end{align}
is a diffeomorphism and its Jacobian equals
\begin{align}\label{Jac}
J(\xx,\yy)=\frac{\det(\I+(\xx+\yy)^{-1})^{(r+1)/2}}{\det(\I+\xx)^{r+1}}.
\end{align}
\end{proposition}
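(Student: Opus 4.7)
The plan is to verify bijectivity of $\psi$ by writing down the inverse explicitly, to note that smoothness in both directions is immediate, and to obtain the Jacobian by decomposing $\psi$ into elementary maps whose Jacobians on $\mathrm{Sym}(r,\R)$ are classical.

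That $\psi$ takes values in $\DD\times\VV$ is already covered by Proposition~\ref{ProK1}: the first coordinate $\ub$ lies in $\VV$ as a $\circ$-product of positive definite matrices, and the statement $\I-\ub\in\VV$ is explicit there. For the inverse, given $(\ub,\vb)\in\DD\times\VV$, I would set
$$\xx^{-1}=(\I+\vb^{-1})\circ\ub^{-1}-\I,\qquad \yy=\vb-\xx.$$
Since $\ub\in\DD$ forces $\ub^{-1}-\I\in\VV$, rewriting $\xx^{-1}=\vb^{-1}+(\I+\vb^{-1})\circ(\ub^{-1}-\I)$ shows $\xx^{-1}\in\VV$, hence $\xx\in\VV$. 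Similarly $\xx^{-1}-\vb^{-1}=(\I+\vb^{-1})\circ(\ub^{-1}-\I)\in\VV$, so $\xx\prec\vb$ and $\yy=\vb-\xx\in\VV$. A direct substitution then confirms that this inverts $\psi$. Smoothness of $\psi$ and of $\psi^{-1}$ is automatic: every ingredient---addition, matrix inversion on $\VV$, the positive definite square root, and the $\circ$-product---is real analytic on the relevant open cones.

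For the Jacobian, factor $\psi$ as
$$(\xx,\yy)\longmapsto(\xx,\xx+\yy)=(\xx,\vb)\longmapsto\bigl((\I+\vb^{-1})\circ(\I+\xx^{-1})^{-1},\,\vb\bigr).$$
The first map has Jacobian $1$, and the differential of the second is block triangular, so the overall Jacobian equals the absolute value of the Jacobian of $\xx\mapsto(\I+\vb^{-1})\circ(\I+\xx^{-1})^{-1}$ with $\vb$ held fixed. I would then decompose this further as
$$\xx\longmapsto\xx+\I\longmapsto(\xx+\I)^{-1}\longmapsto\I-(\xx+\I)^{-1}=(\I+\xx^{-1})^{-1}\longmapsto(\I+\vb^{-1})\circ(\I+\xx^{-1})^{-1}$$
and invoke the two standard facts on $\mathrm{Sym}(r,\R)$: the conjugation $w\mapsto b\cdot w\cdot b^{\top}$ has Jacobian $|\det(b)|^{r+1}$, and the inversion $z\mapsto z^{-1}$ on $\VV$ has Jacobian $|\det(z)|^{-(r+1)}$. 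The nontrivial contributions are $\det(\I+\xx)^{-(r+1)}$ from inverting $\xx+\I$ and $\det(\I+\vb^{-1})^{(r+1)/2}$ from conjugating by $(\I+\vb^{-1})^{1/2}$, while the translations contribute $1$. Multiplying and resubstituting $\vb=\xx+\yy$ yields \eqref{Jac}.

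The main obstacle is simply keeping the Jacobian bookkeeping straight, in particular the two different exponents $r+1$ for plain conjugation by $b$ versus $(r+1)/2$ for conjugation by $b^{1/2}$, and making sure the inverse map genuinely lands in $\VV\times\VV$ rather than just in $\mathrm{Sym}(r,\R)^2$.
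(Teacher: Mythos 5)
Your proposal is correct, and it is worth noting that the paper itself gives no proof of Proposition \ref{ProK}: it simply cites \cite{Kou12}, where the statement is established. Your argument is a sound, self-contained reconstruction of that computation. The explicit inverse is right: from $\ub=(\I+\vb^{-1})\circ(\I+\xx^{-1})^{-1}$ one indeed gets $\I+\xx^{-1}=(\I+\vb^{-1})\circ\ub^{-1}$, your rewriting $\xx^{-1}=\vb^{-1}+(\I+\vb^{-1})\circ(\ub^{-1}-\I)$ checks out, and the antitonicity of inversion on $\VV$ correctly yields $\vb-\xx\in\VV$, so the inverse lands in $\VV\times\VV$ and $\psi$ is a bijection onto $\DD\times\VV$ (the range statement being Proposition \ref{ProK1}). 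The Jacobian bookkeeping is also correct: the shear $(\xx,\yy)\mapsto(\xx,\xx+\yy)$ has Jacobian $1$, the block-triangular structure reduces everything to the partial map $\xx\mapsto(\I+\vb^{-1})\circ(\I+\xx^{-1})^{-1}$ at fixed $\vb$, the identity $(\I+\xx^{-1})^{-1}=\I-(\I+\xx)^{-1}$ makes the decomposition into translation, inversion (Jacobian $\det(\I+\xx)^{-(r+1)}$) and congruence by $(\I+\vb^{-1})^{1/2}$ (Jacobian $\det(\I+\vb^{-1})^{(r+1)/2}$) legitimate, and the product reproduces \eqref{Jac}; a scalar sanity check at $r=1$ confirms the exponents. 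Compared with merely invoking \cite{Kou12}, your route has the advantage of exhibiting the inverse explicitly and reducing the Jacobian to the two classical determinant formulas on $\mathrm{Sym}(r,\R)$, at the cost of a slightly longer write-up.
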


\begin{theorem}\label{property}
	Let 
	$$(X,Y)\sim K(a,b,\sigma)\otimes\gamma(b-a,\sigma)$$
	and define 
	\begin{align}\label{UVXY}
	(U,V)=\psi(X,Y),
	\end{align}
	where $\psi$ is given by \eqref{psi}.
	Then 
	$$(U,V)\sim Beta(a,b-a)\otimes K(b,a,\sigma).
	$$
\end{theorem}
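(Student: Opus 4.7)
The plan is to perform a direct change of variables under the diffeomorphism $\psi$ from Proposition \ref{ProK}. Since $X$ and $Y$ are independent, the joint density of $(X,Y)$ on $\VV\times\VV$ factors as the product of the Kummer and Wishart densities, hence is proportional to
$$ \det(\xx)^{a-(r+1)/2}\,\det(\I+\xx)^{-b}\,\det(\yy)^{b-a-(r+1)/2}\,\exp\bigl(-\scalar{\sigma,\xx+\yy}\bigr). $$
By the standard transformation formula applied to \eqref{UVXY}, the joint density of $(U,V)$ at $(\ub,\vb)\in\DD\times\VV$ equals the above evaluated at $(\xx,\yy)=\psi^{-1}(\ub,\vb)$, divided by the Jacobian \eqref{Jac}. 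Because $\xx+\yy=\vb$, the exponential factor immediately becomes $\exp(-\scalar{\sigma,\vb})$.

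The main algebraic step is to rewrite the determinantal part in terms of $(\ub,\vb)$ only. For this I would apply Proposition \ref{ProK1} in the rearranged form $\det(\xx)/\det(\I+\xx)=\det(\ub)\det(\vb)/\det(\I+\vb)$ and $\det(\yy)=\det(\I-\ub)\det(\I+\xx)\det(\vb)$, together with the identity $\det(\I+\vb^{-1})=\det(\I+\vb)/\det(\vb)$ to handle the Jacobian. After substitution the cumulative exponent of $\det(\I+\xx)$ in
$$\det(\xx)^{a-(r+1)/2}\,\det(\yy)^{b-a-(r+1)/2}\,\det(\I+\xx)^{-b+r+1}\,\det(\I+\vb^{-1})^{-(r+1)/2}$$
turns out to be $(a-(r+1)/2)+(b-a-(r+1)/2)+(-b+r+1)=0$, so all residual $\xx$-dependence vanishes and the remaining factors depend only on $\ub$ and $\vb$.

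Collecting what survives gives, up to a positive constant,
$$\det(\ub)^{a-(r+1)/2}\det(\I-\ub)^{b-a-(r+1)/2}\cdot\det(\vb)^{b-(r+1)/2}\det(\I+\vb)^{-a}\exp(-\scalar{\sigma,\vb}),$$
whose first factor is the $Beta(a,b-a)$ density on $\DD$ and whose second factor is the $K(b,a,\sigma)$ density on $\VV$. Since $\psi$ is a bijection onto $\DD\times\VV$, the supports match, and the product structure of this joint density simultaneously identifies the marginal laws of $U$ and $V$ and establishes their independence. The only real obstacle is careful bookkeeping of the exponents and of the $(r+1)/2$ terms coming from the Jacobian; the cancellation of $\det(\I+\xx)$ depends critically on $Y$ having shape parameter $b-a$, and any other matching of parameters would leave a residual $\det(\I+\xx)$-factor obstructing the factorization.
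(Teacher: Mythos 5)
Your proof is correct: the change-of-variables computation via Proposition \ref{ProK1} and the Jacobian \eqref{Jac} goes through exactly as you describe, with the $\det(\I+\xx)$ exponents summing to zero and the surviving factors giving the $Beta(a,b-a)\otimes K(b,a,\sigma)$ density. Note that the paper itself states this theorem without proof, citing \cite{Kou12}, and your argument is essentially that standard one.
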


The independence property established in \cite{Kou12} has been proved for the case $r = 1$ in \cite{KV11}, where the converse has been proved too, thus providing a characterization of Kummer and gamma laws under the assumption of existence of smooth densities of $X$ and $Y$. 
At the end of \cite{Kou12}, the author writes that ``It is highly likely, although not easy to prove, that this characterization holds also in the case of matrices.'' 
We confirm his belief in the following Theorem, which shows that {matrix} Kummer and Wishart laws are the only laws having property given in Theorem \ref{property}.
\begin{theorem}\label{mainprob}
Let $X$ and $Y$ be two independent random matrices valued in $\VV$ with {continuous densities, which are strictly positive on $\VV$}. The random matrices $U$ and $V$ defined {in \eqref{UVXY}} are independent if and only if $X$ follows the matrix Kummer distribution $K(a, b, \sigma)$ and $Y$ the Wishart distribution $\gamma(b - a, \sigma)$ for some $a, b, \sigma$ with $a>\frac{r-1}{2}$, $b-a>\frac{r-1}{2}$ and $\sigma\in\mathcal{S}_r^+$.
\end{theorem}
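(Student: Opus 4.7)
The plan is to reduce the characterization to the functional equation solved in Theorem \ref{mainfun} and to read off the probability densities from its general solution. The ``if'' direction is contained in Theorem \ref{property}, so I focus on the converse.

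Assume $X, Y$ are independent with continuous, strictly positive densities $f_X, f_Y$ on $\VV$, and that $U, V$ defined by \eqref{UVXY} are independent. Because $\psi$ is a diffeomorphism onto $\DD \times \VV$ (Proposition \ref{ProK}), the joint density of $(U,V)$ is continuous and strictly positive there; since it must split as $f_U(\ub) f_V(\vb)$, I can choose continuous, strictly positive marginals such that the change-of-variables identity
$$f_X(\xx)\, f_Y(\yy) = J(\xx,\yy)\, f_U(\psi_1(\xx,\yy))\, f_V(\xx+\yy)$$
holds at every $(\xx,\yy)\in\VV^2$, where $J$ is given by \eqref{Jac} and $\psi_1(\xx,\yy) = (\I+(\xx+\yy)^{-1})\circ(\I+\xx^{-1})^{-1}$ is the first coordinate of \eqref{psi}.

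Taking logarithms (legitimate by strict positivity) and using $\det(\I+(\xx+\yy)^{-1}) = \det(\I+\xx+\yy)/\det(\xx+\yy)$ to split the Jacobian, I would regroup the terms so as to absorb $-(r+1)\log\det(\I+\xx)$ into the logarithm of $f_X$ and the two $\det(\xx+\yy)$-terms into the logarithm of $f_V$. Setting
\begin{align*}
f(\xx) &= \log f_X(\xx) + (r+1)\log\det(\I+\xx), \\
g(\yy) &= \log f_Y(\yy), \\
k(\vb) &= \log f_V(\vb) + \tfrac{r+1}{2}\log\det(\I+\vb) - \tfrac{r+1}{2}\log\det(\vb), \\
h(\ub) &= \log f_U(\ub),
\end{align*}
the identity becomes exactly \eqref{maineq}, and Theorem \ref{mainfun} yields $p, q \in \RR$, $\cb \in \mathrm{Sym}(r, \RR)$ and additive constants such that $\log f_X(\xx) = -\scalar{\cb, \xx} + p\log\det(\xx) - (q + r + 1)\log\det(\I+\xx) + \mathrm{const}$ and $\log f_Y(\yy) = -\scalar{\cb, \yy} + (q-p)\log\det(\yy) + \mathrm{const}$.

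The functional equation therefore carries the heavy lifting; it remains to match these forms with the Kummer and Wishart densities and to pin down the parameters from integrability. Setting $a = p + (r+1)/2$ and $b = q + r + 1$, the expressions match $K(a, b, \cb)$ and $\gamma(b-a, \cb)$. Finite mass of $f_Y$ on $\VV$ requires $\cb \in \VV$ (otherwise $\exp(-\scalar{\cb,\yy})$ fails to decay along some direction of the cone) together with $q - p > -1$; finite mass of $f_X$ then forces $p > -1$. Translated back, this yields $\cb \in \VV$, $a > (r-1)/2$ and $b - a > (r-1)/2$, as required. The most delicate remaining step is precisely this integrability analysis, in particular verifying that $\cb$ must be strictly positive definite rather than merely symmetric.
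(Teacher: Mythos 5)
Your proposal is correct and follows essentially the same route as the paper: reduce via the change-of-variables identity and the Jacobian \eqref{Jac} to the functional equation \eqref{maineq} (with exactly the same choice of $f,g,k,h$, since $\det(\I+\vb^{-1})=\det(\I+\vb)/\det(\vb)$), invoke Theorem \ref{mainfun}, and then identify the Kummer and Wishart densities with $a=p+(r+1)/2$, $b=q+r+1$, using integrability to force $\cb\in\VV$, $p>-1$, $q-p>-1$. The integrability step you flag as delicate is asserted in the paper at the same level of detail, so there is no gap relative to the published argument.
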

\begin{proof}
The following identity holds almost everywhere with respect to Lebesgue measure
\begin{align}\label{dens}
f_{(X,Y)}(\xx,\yy)=f_{(U,V)}(\psi(\xx,\yy))J(\xx,\yy),
\end{align}
where $\psi\colon \VV\times\VV\to \DD\times\VV$ is the bijection given by \eqref{psi} and $J$ is the Jacobian of $\psi$. 
By independence, we have $f_{(X,Y)}=f_X f_Y$ and similarly $f_{(U,V)}=f_U f_V$. Since the densities of $X$ and $Y$ are assumed to be continuous, the above equation
holds for every $\xx,\yy\in\VV$.

After taking logarithms (it is permitted, since $f_X$ and $f_Y$ are assumed to be strictly positive on $\VV$) and using \eqref{Jac}, we arrive at \eqref{maineq} with
\begin{align*}
f(\xx)&=\log f_X(\xx)+(r+1)\log\det(\I+\xx), \\
g(\xx)&=\log f_Y(\xx),\\
k(\xx)&=\log f_V(\xx)+\frac{r+1}{2}\log\det(\I+\xx^{-1}),\\
h(\ub)&=\log f_U(\ub), 
\end{align*}
for $\xx\in\VV$ and $\ub\in\mathcal{D}$.
Thus, {by Theorem \ref{mainfun} we conclude that}
\begin{align*}
f_X(\xx)&=\exp\{f(\xx)\}\det(\I+\xx)^{-(r+1)}=e^{C_1}\det(\xx)^p \det(\I+\xx)^{-(r+1+q)}e^{-\scalar{\cb,\xx}},\\
f_Y(\xx)&=\exp\{g(\xx)\}=e^{C_2}\det(\xx)^{q-p}e^{-\scalar{\cb,\xx}},
\end{align*}
which are integrable on $\VV$ if and only if $\cb\in\VV$, $p>-1$ and $q-p>-1$ that is,
$$(X,Y)\sim K(a,b,\sigma)\otimes\gamma(b-a,\sigma)$$
with $a=p+(r+1)/2$ and $b=q+r+1$.
\end{proof}

\subsection*{Acknowledgment} This research was partially supported by NCN Grant No. 2016/21/B/ST1/00005. The author thanks A. Piliszek for helpful discussions.

\bibliographystyle{plainnat}


\def\polhk#1{\setbox0=\hbox{#1}{\ooalign{\hidewidth
  \lower1.5ex\hbox{`}\hidewidth\crcr\unhbox0}}}

\end{document}